\newtheorem{thm}{Theorem}
\newtheorem{prop}{Proposition}
\DeclareSymbolFont{script}{U}{eus}{m}{n}
\DeclareMathSymbol{\Wedge}{0}{script}{"5E}
\begin{document}

\title[Linearised Einstein]
{The linearised Einstein equations as a gauge theory}
\author[M.G.~Eastwood]{Michael Eastwood}
\address{\hskip-\parindent
School of Mathematical Sciences\\
University of Adelaide\\ 
SA 5005\\ 
Australia}
\email{meastwoo@gmail.com}
\subjclass{83C25, 53A20} 
\begin{abstract}
We linearise the Einstein vacuum equations with a cosmological constant via the
Calabi operator from projective differential geometry.
\end{abstract}
\thanks{This work was supported by Australian Research Council (Discovery 
Project DP190102360).}

\maketitle
\section{Introduction}
This article concerns the vacuum Einstein equations with a cosmological constant
\begin{equation}\label{vacuum_einstein}R_{ab}=\lambda g_{ab},\end{equation} 
where $g_{ab}$ is a (semi-)Riemannian metric and $R_{ab}$ its Ricci tensor.  By
the contracted Bianchi identity, the smooth function $\lambda$ is obliged to be
constant.  To investigate these equations, one often linearises around the flat
metric, i.e.~considers a metric of the form
$$\eta_{ab}+\epsilon h_{ab},$$
where $\eta_{ab}$ is the Euclidean or Minkowski metric and $h_{ab}$ is
symmetric tensor.  For $h_{ab}$ fixed, this is non-singular for $\epsilon$
sufficiently small and one can compute its various curvatures, including the
Ricci tensor.  The linearised Einstein equations arise by keeping only the
linear terms in~$\epsilon$ (e.g.~\cite[\S5.7]{OT} or~\cite[\S7.5]{W}).

The aim of this article is to linearise around a general vacuum solution, 
i.e.~to consider a metric of the form 
\begin{equation}\label{perturbed_metric}
\widetilde g_{ab}=g_{ab}+\epsilon h_{ab}\end{equation}
where the `background metric' $g_{ab}$ satisfies~(\ref{vacuum_einstein}).
Writing $\bigodot^2\!\Wedge^1$ for the bundle of symmetric covariant
$2$-tensors, we shall show that there is a second order linear differential
operator
$$\textstyle{\mathcal{P}}:\bigodot^2\!\Wedge^1\to\bigodot^2\!\Wedge^1,$$
whose kernel consists of $h_{ab}$ whose corresponding perturbed metric 
(\ref{perturbed_metric}) also satisfies (\ref{vacuum_einstein}) to first order 
in~$\epsilon$. Furthermore, if ${\mathcal{K}}:\Wedge^1\to\bigodot^2\!\Wedge^1$ 
is the Killing operator $X_b\mapsto\nabla_{(a}X_{b)}$, then we shall find that 
\begin{equation}\textstyle\label{einstein_complex}
\Wedge^1\xrightarrow{\,{\mathcal{K}}\,}\bigodot^2\!\Wedge^1
\xrightarrow{\,{\mathcal{P}}\,}\bigodot^2\!\Wedge^1\end{equation} is a complex,
i.e.~the composition ${\mathcal{P}}\circ{\mathcal{K}}$ vanishes.  The range of
${\mathcal{K}}$ can be interpreted as the infinitesimal changes in metric of
the form ${\mathcal{L}}_Xg_{ab}$, where ${\mathcal{L}}_X$ denotes the Lie
derivative along a vector field~$X^a$, i.e.~the infinitesimal changes in metric
merely due to infinitesimal co\"ordinate changes (e.g.~\cite[(5.7.11)]{OT}
or~\cite[(C.2.17)]{W}).  Thus, the complex (\ref{einstein_complex}) gives a
local `potential/gauge' description of the linearised Einstein equations.

It it not at all obvious that a complex of the form (\ref{einstein_complex})
should exist.  Indeed, for an arbitrary background metric one na\"{\i}vely
might expect a complex of the form
\begin{equation}\label{BGG}
\textstyle\Wedge^1\xrightarrow{\,{\mathcal{K}}\,}\bigodot^2\!\Wedge^1
\xrightarrow{\,{\mathcal{C}}\,}
\begin{picture}(12,12)(0,2)
\put(0,0){\line(1,0){12}}
\put(0,6){\line(1,0){12}}
\put(0,12){\line(1,0){12}}
\put(0,0){\line(0,1){12}}
\put(6,0){\line(0,1){12}}
\put(12,0){\line(0,1){12}}
\end{picture}\,\end{equation}
where 
$\begin{picture}(12,12)(0,2) 
\put(0,0){\line(1,0){12}}
\put(0,6){\line(1,0){12}} 
\put(0,12){\line(1,0){12}} 
\put(0,0){\line(0,1){12}}
\put(6,0){\line(0,1){12}} 
\put(12,0){\line(0,1){12}} 
\end{picture}$ denotes the bundle of tensors $X_{abcd}$ with Riemann symmetries
$X_{abcd}=X_{[ab][cd]}$ and $X_{[abc]d}=0$ but, as we shall see, such a complex
exists only on a background of constant sectional curvature.

The notation and conventions in this article follow~\cite{OT}.  In particular,
indices are always {\em abstract\/} and do not entail any choice of local
co\"ordinates.  Round brackets on indices denote symmetrisation whilst square
brackets denote skew-symmetrisation (as in the upcoming
formula~(\ref{calabi_operator})).  We shall denote the cotangent bundle by
$\Wedge^1$ and our convention for the curvature $R_{ab}{}^c{}_d$ of a
torsion-free affine connection $\nabla_a$ is so that
\begin{equation}\label{def_curv}
(\nabla_a\nabla_b-\nabla_b\nabla_a)X^c=R_{ab}{}^c{}_dX^d,\end{equation}
for all vector fields~$X^c$.

I would like to thank Federico Costanza, Thomas Leistner, and Benjamin 
McMillan for many crucial conversations.

\section{The Calabi operator}
There is a natural candidate for the operator ${\mathcal{C}}$ in~(\ref{BGG}).
In~\cite{E} it is shown that, suitably interpreted, the {\em Killing
operator\/} $X_b\mapsto\nabla_{(a}X_{b)}$ is {\em projectively invariant\/} 
and that there is a projectively invariant operator 
${\mathcal{C}}:\bigodot^2\!\Wedge^1
\to
\begin{picture}(12,12)(0,2)
\put(0,0){\line(1,0){12}}
\put(0,6){\line(1,0){12}}
\put(0,12){\line(1,0){12}}
\put(0,0){\line(0,1){12}}
\put(6,0){\line(0,1){12}}
\put(12,0){\line(0,1){12}}
\end{picture}\,$ given by
\begin{equation}\label{calabi_operator}\textstyle h_{ab}\mapsto
\nabla_{(a}\nabla_{c)}h_{bd}-\nabla_{(b}\nabla_{c)}h_{ad}
-\nabla_{(a}\nabla_{d)}h_{bc}
+\nabla_{(b}\nabla_{d)}h_{ac}-R_{ab}{}^e{}_{[c}h_{d]e}
-R_{cd}{}^e{}_{[a}h_{b]e},\end{equation}
which we shall refer to as the {\em Calabi operator\/}.  As observed
in~\cite{CELM}, the composition ${\mathcal{C}}\circ{\mathcal{K}}$ is given by
\begin{equation}\label{C_circ_K}
X_a\longmapsto 2R_{ab}{}^e{}_{[c}\mu_{d]e}+2R_{cd}{}^e{}_{[a}\mu_{b]e}
-(\nabla^eR_{abcd})X_e,\enskip\mbox{where }\mu_{ab}\equiv\nabla_{[a}X_{b]}.
\end{equation}
In particular, this composition vanishes if and only if $g_{ab}$ has constant
sectional curvature.  Indeed, Calabi showed~\cite{C} that the complex
(\ref{BGG}) is locally exact in this case (cf.~\cite{CELM}). The Calabi 
operator therefore has the optimal symbol and it follows that there are no 
better curvature terms. 

\section{The deformation operator}
Even so, the Calabi operator is not exactly what we obtain by deforming the 
metric. The following proposition is taken from~\cite{CELM}.
\begin{prop}
If\/ $\widetilde g_{ab}=g_{ab}+\epsilon h_{ab}$, then the corresponding 
Riemann curvature tensor is
\begin{equation}\label{riemann_perturbed}\widetilde{R}_{abcd}
=R_{abcd}-\frac{\epsilon}2\left[\!\begin{array}{c}
\nabla_{(a}\nabla_{c)}h_{bd}-\nabla_{(b}\nabla_{c)}h_{ad}
-\nabla_{(a}\nabla_{d)}h_{bc}+\nabla_{(b}\nabla_{d)}h_{ac}\\[5pt]
{}+R_{ab}{}^e{}_{[c}h_{d]e}+R_{cd}{}^e{}_{[a}h_{b]e}\end{array}\!\right]
+{\mathrm{O}}(\epsilon^2).\end{equation}
\end{prop}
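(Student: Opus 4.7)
The plan is to work through the standard perturbative computation of the Levi-Civita connection and then its curvature, and finally to massage the result into the symmetrised form displayed in~(\ref{riemann_perturbed}).

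First I would introduce the difference tensor between the two Levi-Civita connections. Both $\widetilde\nabla$ and $\nabla$ are torsion-free, so there is a symmetric tensor $\gamma^c{}_{ab}$ such that $\widetilde\nabla_a Y^c=\nabla_aY^c+\epsilon\gamma^c{}_{ab}Y^b+\mathrm{O}(\epsilon^2)$ for all vector fields~$Y^c$. Imposing $\widetilde\nabla_a\widetilde g_{bc}=0$ and expanding to order~$\epsilon$, using $\nabla g=0$, yields the familiar Koszul-type formula
\begin{equation*}\textstyle
\gamma_{cab}\equiv g_{cd}\gamma^d{}_{ab}
=\tfrac12\bigl(\nabla_ah_{bc}+\nabla_bh_{ac}-\nabla_ch_{ab}\bigr).
\end{equation*}

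Next I would substitute this into the general identity for how the curvature of a connection changes under a deformation of the connection. Applying~(\ref{def_curv}) to $\widetilde\nabla$ and $\nabla$ and keeping only first-order terms gives
\begin{equation*}
\widetilde R_{ab}{}^c{}_d=R_{ab}{}^c{}_d
+\epsilon\bigl(\nabla_a\gamma^c{}_{bd}-\nabla_b\gamma^c{}_{ad}\bigr)+\mathrm{O}(\epsilon^2);
\end{equation*}
the quadratic $\gamma\gamma$-terms drop because $\gamma$ itself is $\mathrm{O}(\epsilon)$. Lowering the $c$-index with $\widetilde g_{ce}=g_{ce}+\epsilon h_{ce}$ then produces the extra algebraic piece $\epsilon\, h_{ce}R_{ab}{}^e{}_d$, and since $\nabla g=0$ I may freely move $g_{ce}$ through~$\nabla_a$ to obtain
\begin{equation*}
\widetilde R_{abcd}=R_{abcd}+\epsilon\bigl(\nabla_a\gamma_{cbd}-\nabla_b\gamma_{cad}\bigr)
+\epsilon\, h_{ce}R_{ab}{}^e{}_d+\mathrm{O}(\epsilon^2).
\end{equation*}

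Expanding $\gamma_{cbd}$ via the Koszul formula generates six second derivatives of~$h$. Two of them combine into a commutator $(\nabla_a\nabla_b-\nabla_b\nabla_a)h_{cd}$, which by~(\ref{def_curv}) produces two Riemann-$h$ algebraic terms. The remaining four second derivatives are of the shapes $\nabla_a\nabla_ch_{bd}$, $\nabla_b\nabla_ch_{ad}$, $\nabla_a\nabla_dh_{bc}$, $\nabla_b\nabla_dh_{ac}$, which is exactly the pattern appearing in~(\ref{riemann_perturbed}) \emph{before} symmetrisation.

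The main obstacle, and the place where I would spend the most care, is reconciling these non-symmetrised second derivatives with the expressions $\nabla_{(a}\nabla_{c)}h_{bd}$ etc.\ that actually appear in the proposition. To do this I would symmetrise each pair $\nabla_a\nabla_c$ by adding and subtracting $\nabla_c\nabla_a$, thereby trading the unsymmetric part for a commutator and hence for further Riemann-$h$ contributions via~(\ref{def_curv}). Collecting all algebraic terms and using the Riemann symmetries $R_{abcd}=R_{[ab][cd]}=R_{cdab}$ together with the first Bianchi identity to combine them into $R_{ab}{}^e{}_{[c}h_{d]e}$ and $R_{cd}{}^e{}_{[a}h_{b]e}$ is the bookkeeping heart of the proof; the overall factor $-\epsilon/2$ then drops out as the consistent normalisation. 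Checking that the extra $h_{ce}R_{ab}{}^e{}_d$ piece combines with the commutator contributions so as to produce exactly the skew-symmetrisations $R_{ab}{}^e{}_{[c}h_{d]e}$ is the one cancellation that makes the final formula compact, and is the step I would verify most carefully.
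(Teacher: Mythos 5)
Your proposal is correct and is essentially the ``straightforward computation'' that the paper leaves to the reader: perturb the Levi-Civita connection by the symmetric difference tensor $\gamma_{cab}=\tfrac12(\nabla_ah_{bc}+\nabla_bh_{ac}-\nabla_ch_{ab})$, linearise the curvature, lower an index with $\widetilde g_{ce}$ to pick up the $\epsilon\,h_{ce}R_{ab}{}^e{}_d$ term, and trade the unsymmetrised second derivatives for symmetrised ones plus commutators via~(\ref{def_curv}). The final bookkeeping does close exactly as you anticipate, since the pair symmetry and first Bianchi identity give $R_{ac}{}^e{}_b-R_{bc}{}^e{}_a=R_{ab}{}^e{}_c$ and $R_{ac}{}^e{}_d-R_{ad}{}^e{}_c=-R_{cd}{}^e{}_a$, so the leftover algebraic pieces combine with the commutator and index-lowering contributions into precisely $R_{ab}{}^e{}_{[c}h_{d]e}+R_{cd}{}^e{}_{[a}h_{b]e}$ with the overall factor $-\epsilon/2$ as displayed in~(\ref{riemann_perturbed}).
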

\begin{proof} A straightforward computation. \end{proof}
Despite the unqualified assertion in~\cite{E}, notice the opposite sign in
front of the curvature terms in comparison to~(\ref{calabi_operator}). For the 
Ricci tensor, we obtain:
\begin{prop}
If\/ $\widetilde g_{ab}=g_{ab}+\epsilon h_{ab}$, then the corresponding 
Ricci curvature tensor is
\begin{equation}\label{ricci_perturbed}\textstyle\widetilde{R}_{bd}
=R_{bd}
+\epsilon\big[R_{(b}{}^eh_{d)e}-\frac12g^{ac}({\mathcal{C}}h)_{abcd}\big]
+{\mathrm{O}}(\epsilon^2).\end{equation}
\end{prop}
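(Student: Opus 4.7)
The plan is to obtain $\widetilde{R}_{bd}$ by contracting the previous proposition's formula for $\widetilde{R}_{abcd}$ with the inverse perturbed metric. Since $\widetilde{g}^{ac}=g^{ac}-\epsilon h^{ac}+\mathrm{O}(\epsilon^2)$ (where $h^{ac}\equiv g^{ae}g^{cf}h_{ef}$), we have
$$\widetilde{R}_{bd}=\widetilde{g}^{ac}\widetilde{R}_{abcd}
=R_{bd}-\epsilon\, h^{ac}R_{abcd}-\tfrac{\epsilon}{2}\,g^{ac}\bigl[\text{second-derivative terms}+R_{ab}{}^e{}_{[c}h_{d]e}+R_{cd}{}^e{}_{[a}h_{b]e}\bigr]+\mathrm{O}(\epsilon^2).$$

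The next step is to relate the bracket to the Calabi operator. Since $(\mathcal{C}h)_{abcd}$ consists of exactly the same second-derivative terms but with the \emph{opposite} sign in front of the curvature terms, one can rewrite the linear part as
$$\widetilde{R}_{bd}-R_{bd}=-\tfrac{\epsilon}{2}\,g^{ac}(\mathcal{C}h)_{abcd}\;-\;\epsilon\,g^{ac}\bigl[R_{ab}{}^e{}_{[c}h_{d]e}+R_{cd}{}^e{}_{[a}h_{b]e}\bigr]\;-\;\epsilon\,h^{ac}R_{abcd}+\mathrm{O}(\epsilon^2).$$
It therefore suffices to verify the purely algebraic identity
$$-g^{ac}\bigl[R_{ab}{}^e{}_{[c}h_{d]e}+R_{cd}{}^e{}_{[a}h_{b]e}\bigr]-h^{ac}R_{abcd}=R_{(b}{}^e h_{d)e}.$$

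To establish this, I would expand both skew-symmetrisations. The two ``diagonal'' contractions $g^{ac}R_{ab}{}^e{}_{c}$ and $g^{ac}R_{cd}{}^e{}_{a}$ simplify (using only pair-exchange and antisymmetry in the last pair of the Riemann tensor) to $-R_b{}^e$ and $-R_d{}^e$, producing the required $R_{(b}{}^e h_{d)e}$ after contracting with the remaining factor of $h$. The four remaining ``off-diagonal'' contractions are of the form $h^{ac}R_{abcd}$ or $h^{ac}R_{a\cdots c\cdots}$; using the first Bianchi identity and the fact that $h^{ac}R_{acbd}=0$ (since $h$ is symmetric while $R$ is antisymmetric in its first pair), these cancel exactly against the $-h^{ac}R_{abcd}$ contribution coming from the variation of the inverse metric.

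The main obstacle is the bookkeeping in this final algebraic step: one must track which pair of Riemann indices is being contracted (first-with-third gives $+R_{bd}$, while first-with-fourth introduces a sign), and then identify precisely the combination that is killed by Bianchi together with the symmetry of $h_{ab}$. Nothing beyond this linear-algebraic manipulation is required; the derivative terms are untouched and simply assemble into $-\tfrac{1}{2}g^{ac}(\mathcal{C}h)_{abcd}$, completing the proof.
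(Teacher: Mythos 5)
Your proposal is correct and follows essentially the same route as the paper: contract \eqref{riemann_perturbed} with $\widetilde g^{ac}=g^{ac}-\epsilon h^{ac}+{\mathrm{O}}(\epsilon^2)$, absorb the second-derivative terms into $-\frac12 g^{ac}({\mathcal{C}}h)_{abcd}$ at the cost of doubling the curvature terms, and then use the trace identity $g^{ac}\bigl(R_{ab}{}^e{}_{[c}h_{d]e}+R_{cd}{}^e{}_{[a}h_{b]e}\bigr)=-R_{(b}{}^eh_{d)e}-h^{ac}R_{abcd}$, of which your displayed algebraic identity is just a rearrangement. The only quibble is in your sketch of that identity's verification (there are two, not four, off-diagonal terms, and pair-exchange symmetry of the Riemann tensor is all that is really needed), but this does not affect the correctness of the argument.
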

\begin{proof} The inverse of the perturbed metric is
$$\widetilde g^{ab}=g^{ab}-\epsilon h^{ab}+{\mathrm{O}}(\epsilon^2),$$
where $h^{ab}\equiv g^{ac}g^{bd}h_{cd}$, in line with the usual conventions for 
`raising and lowering indices,' always with respect to the background 
metric~$g_{ab}$. We must compute $\widetilde 
R_{bd}\equiv\widetilde g^{ac}\widetilde R_{abcd}$ and for this let us write 
(\ref{riemann_perturbed}) as 
$$\textstyle\widetilde{R}_{abcd}
=R_{abcd}
-\epsilon\big[\frac12({\mathcal{C}}h)_{abcd}
+R_{ab}{}^e{}_{[c}h_{d]e}+R_{cd}{}^e{}_{[a}h_{b]e}\big]
+{\mathrm{O}}(\epsilon^2).$$
Then
$$\textstyle\widetilde R_{bd}
=R_{bd}-\epsilon\big[h^{ac}R_{abcd}+\frac12g^{ac}({\mathcal{C}}h)_{abcd}+
g^{ac}(R_{ab}{}^e{}_{[c}h_{d]e}+R_{cd}{}^e{}_{[a}h_{b]e})\big]
+{\mathrm{O}}(\epsilon^2).$$
However,
$$g^{ac}(R_{ab}{}^e{}_{[c}h_{d]e}+R_{cd}{}^e{}_{[a}h_{b]e})
=-R_{(b}{}^eh_{d)e}-h^{ac}R_{abcd}$$
and there is some cancellation to yield~(\ref{ricci_perturbed}).\end{proof}

\section{Imposing the Einstein equations}
\begin{prop}\label{first_einstein_implication}
If $g_{ab}$ satisfies \eqref{vacuum_einstein} and\/
$h_{ab}=\nabla_{(a}X_{b)}$ for some $1$-form $X_a$, then
$$g^{ac}({\mathcal{C}}h)_{abcd}=0.$$
\end{prop}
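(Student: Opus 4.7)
The strategy I would use is to exploit the identification $h_{ab}=\nabla_{(a}X_{b)}=\tfrac12({\mathcal{L}}_Xg)_{ab}$, which says that the perturbation is infinitesimally the Lie derivative of the background metric along $X^a$. Consequently $\widetilde g_{ab}=g_{ab}+\epsilon h_{ab}$ agrees, to first order in $\epsilon$, with the pullback $\phi_{\epsilon/2}^{*}g_{ab}$ of the background metric along the flow $\phi_t$ of $X^a$. Since the assignment $g\mapsto R_{bd}$ is diffeomorphism-equivariant and depends smoothly on $g$ and its derivatives, this yields at once
$$\widetilde R_{bd}=R_{bd}+\tfrac{\epsilon}{2}({\mathcal{L}}_XR)_{bd}+\mathrm{O}(\epsilon^2).$$

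The Einstein hypothesis then finishes the job. I would substitute $R_{bd}=\lambda g_{bd}$ into the Lie derivative above to obtain $({\mathcal{L}}_XR)_{bd}=\lambda({\mathcal{L}}_Xg)_{bd}=2\lambda h_{bd}$, so that $\widetilde R_{bd}-R_{bd}=\epsilon\lambda h_{bd}+\mathrm{O}(\epsilon^2)$. The same hypothesis also collapses the `curvature correction' in \eqref{ricci_perturbed}, since $R_b{}^e=\lambda\delta_b^{\,e}$ gives $R_{(b}{}^{e}h_{d)e}=\lambda h_{bd}$. Comparing the two expressions for $\widetilde R_{bd}-R_{bd}$ via \eqref{ricci_perturbed}, the two $\epsilon\lambda h_{bd}$ contributions cancel and what remains at order $\epsilon$ is precisely $g^{ac}({\mathcal{C}}h)_{abcd}=0$.

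I do not anticipate a serious obstacle: the only step beyond routine bookkeeping is the diffeomorphism interpretation of $h_{ab}$, and this is legitimate because the flow of $X^a$ exists locally. A purely algebraic alternative would be to substitute $h_{ab}=\nabla_{(a}X_{b)}$ directly into \eqref{calabi_operator}, contract with $g^{ac}$, and simplify using the commutator \eqref{def_curv}, the first Bianchi identity, and the Einstein condition to check that every term cancels. That route should succeed, but the curvature bookkeeping is delicate; the Lie-derivative argument is the one I would trust to be both short and transparent, and it has the additional virtue of making geometrically transparent \emph{why} the conclusion holds---the perturbation is pure gauge.
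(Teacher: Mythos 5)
Your argument is correct, but it takes a genuinely different route from the paper. The paper proves the proposition by pure tensor algebra: it takes the formula \eqref{C_circ_K} for the composition ${\mathcal{C}}\circ{\mathcal{K}}$, contracts with $g^{ac}$ to obtain $g^{ac}({\mathcal{C}}h)_{abcd}=-2R_{(b}{}^e\mu_{d)e}-(\nabla^eR_{bd})X_e$ with $\mu_{ab}=\nabla_{[a}X_{b]}$, and notes that the Einstein condition \eqref{vacuum_einstein} kills both terms (the first because $R_{(b}{}^e\mu_{d)e}=\lambda\mu_{(db)}=0$ by skewness of $\mu$, the second because $\lambda$ is constant). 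You instead argue by naturality: since $h_{ab}=\tfrac12({\mathcal{L}}_Xg)_{ab}$, the perturbed metric agrees to first order with $\phi_{\epsilon/2}^{*}g$ along the (locally defined) flow of $X^a$, so diffeomorphism-equivariance of the Ricci operator gives $\widetilde R_{bd}=R_{bd}+\tfrac{\epsilon}{2}({\mathcal{L}}_XR)_{bd}+\mathrm{O}(\epsilon^2)$, and comparison with \eqref{ricci_perturbed} under $R_{bd}=\lambda g_{bd}$ forces $g^{ac}({\mathcal{C}}h)_{abcd}=0$. This is sound, provided you make explicit two points you only gesture at: the identity is pointwise, so it suffices to work on a neighbourhood where the flow exists; and the first-order term of $\widetilde R_{bd}$ depends only on the first-order perturbation $h_{ab}$ (the linearised Ricci is a differential operator in $h$), which is what justifies replacing $\widetilde g$ by $\phi_{\epsilon/2}^{*}g$ up to $\mathrm{O}(\epsilon^2)$. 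You also implicitly use that $\lambda$ is constant when writing $({\mathcal{L}}_XR)_{bd}=\lambda({\mathcal{L}}_Xg)_{bd}$, just as the paper needs it for $\nabla^eR_{bd}=0$. As for what each approach buys: yours is conceptually transparent---the perturbation is pure gauge, so it cannot obstruct the Einstein equations at first order---and avoids all curvature bookkeeping, though it yields only the trace of ${\mathcal{C}}\circ{\mathcal{K}}$ and leans on \eqref{ricci_perturbed}; the paper's two-line contraction of \eqref{C_circ_K} is self-contained at the tensorial level and keeps visible the full uncontracted curvature obstruction, which is exactly what is used elsewhere to explain why \eqref{BGG} is a complex only on backgrounds of constant sectional curvature.
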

\begin{proof} If we write $\mu_{ab}\equiv\nabla_{[a}X_{b]}$, then it follows 
from (\ref{C_circ_K}) that
$$\begin{array}{rcll}g^{ac}({\mathcal{C}}h)_{abcd}
&=&g^{ac}\big[2R_{ab}{}^e{}_{[c}\mu_{d]e}+2R_{cd}{}^e{}_{[a}\mu_{b]e}
-(\nabla^eR_{abcd})X_e\big]\\[5pt]
&=&-2R_{(b}{}^e{}\mu_{d)e}-(\nabla^eR_{bd})X_e.
\end{array}$$
Now, from (\ref{vacuum_einstein}), both of these terms vanish.
\end{proof}
\begin{prop}\label{second_einstein_implication}
If $g_{ab}$ satisfies \eqref{vacuum_einstein} and\/
$\widetilde g_{ab}=g_{ab}+\epsilon h_{ab}$, then
$$\textstyle\widetilde R_{ab}=\lambda\widetilde g_{ab}
-\frac12\epsilon g^{ab}({\mathcal{C}}h)_{abcd}+{\mathrm{O}}(\epsilon^2).$$
In particular, the perturbed metric $\widetilde g_{ab}$ satisfies the vacuum 
Einstein equations $\widetilde R_{ab}=\lambda\widetilde g_{ab}$ to first order 
in $\epsilon$ if and only $g^{ac}({\mathcal{C}}h)_{abcd}=0$.\end{prop}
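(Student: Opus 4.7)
The plan is to start directly from equation~(\ref{ricci_perturbed}) of the previous proposition, which already repackages the perturbed Ricci tensor in terms of the Calabi operator, and then feed in the background Einstein condition $R_{ab}=\lambda g_{ab}$. Under this hypothesis, the zeroth-order term $R_{bd}$ becomes $\lambda g_{bd}$, and raising an index yields $R_{b}{}^{e}=\lambda\delta_{b}{}^{e}$, so that
$$R_{(b}{}^{e}h_{d)e}=\lambda\delta_{(b}{}^{e}h_{d)e}=\lambda h_{bd}.$$
Collecting these two contributions produces $\lambda(g_{bd}+\epsilon h_{bd})=\lambda\widetilde g_{bd}$, and the only surviving first-order term in (\ref{ricci_perturbed}) is $-\tfrac12\epsilon g^{ac}(\mathcal{C}h)_{abcd}$. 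This is exactly the displayed formula.

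The ``in particular'' assertion is then immediate: because the omitted error is $\mathrm{O}(\epsilon^{2})$, the perturbed vacuum Einstein equation $\widetilde R_{bd}=\lambda\widetilde g_{bd}$ holds to first order in $\epsilon$ if and only if the coefficient of $\epsilon$ on the right-hand side, namely $g^{ac}(\mathcal{C}h)_{abcd}$, vanishes.

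There is no substantive obstacle in this argument, since all the genuine work has been absorbed into the preceding propositions — in particular into the clean Calabi-operator packaging of the linearised Riemann tensor in (\ref{riemann_perturbed}), and into the algebraic bookkeeping producing (\ref{ricci_perturbed}). The real content of the proposition is the identification it provides: the obstruction to the linearised Einstein equations is precisely $g^{ac}(\mathcal{C}h)_{abcd}$, which by Proposition~\ref{first_einstein_implication} vanishes on pure gauge deformations $h_{ab}=\nabla_{(a}X_{b)}$. This is exactly what is needed to set up the complex~(\ref{einstein_complex}) promised in the introduction, with the operator $\mathcal{P}$ defined by $h\mapsto g^{ac}(\mathcal{C}h)_{abcd}$ (possibly up to symmetrisation and normalisation).
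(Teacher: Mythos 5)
Your proposal is correct and follows essentially the same route as the paper, whose proof is simply ``Immediate from~(\ref{ricci_perturbed})'': you have merely written out the substitution $R_{bd}=\lambda g_{bd}$ and $R_{(b}{}^e h_{d)e}=\lambda h_{bd}$ explicitly, which is exactly the intended argument.
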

\begin{proof} Immediate from~(\ref{ricci_perturbed}). \end{proof}

\section{The Einstein deformation complex}
Let us define ${\mathcal{P}}:\bigodot^2\!\Wedge^1\to\bigodot^2\!\Wedge^1$ by
$$h_{ab}\longmapsto g^{ac}({\mathcal{C}}h)_{abcd},$$
equivalently, using~(\ref{def_curv}), by
\begin{equation}\label{this_is_P}h_{ab}\longmapsto
\Delta h_{bd}-2\nabla^e\nabla_{(b}h_{d)e}+\nabla_b\nabla_dh
+2R_{(b}{}^eh_{d)e},\enskip\mbox{where}\enskip\Delta\equiv 
\nabla^a\nabla_a\mbox{ and }h\equiv h^a{}_a.\end{equation}
\begin{thm} If $g_{ab}$ satisfies the vacuum Einstein
equations~\eqref{vacuum_einstein}, then \eqref{einstein_complex} is a complex.
The kernel of ${\mathcal{P}}$ consists of those symmetric tensors $h_{ab}$ such
that\/ $\widetilde g_{ab}\equiv g_{ab}+\epsilon h_{ab}$ satisfies the vacuum
Einstein equations $\widetilde R_{ab}=\lambda\widetilde g_{ab}$ to first order
in~$\epsilon$.  The range of ${\mathcal{K}}$ may be regarded as those $h_{ab}$
arising from infinitesimal changes of co\"ordinate.
\end{thm}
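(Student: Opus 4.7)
The three assertions of the theorem can each be read off from the propositions assembled in the preceding two sections, so my plan is to treat them in turn without any fresh computation.

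First, for the claim that \eqref{einstein_complex} is a complex, I need only verify $\mathcal{P}\circ\mathcal{K}=0$. Given $X_a$ and setting $h_{ab}=\nabla_{(a}X_{b)}=\mathcal{K}(X)_{ab}$, Proposition~\ref{first_einstein_implication} asserts precisely that $g^{ac}(\mathcal{C}h)_{abcd}=0$ when $g_{ab}$ satisfies \eqref{vacuum_einstein}. By the very definition of $\mathcal{P}$ this reads $\mathcal{P}(\mathcal{K}X)=0$, as required.

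Second, the characterisation of $\ker\mathcal{P}$ is the content of Proposition~\ref{second_einstein_implication}: assuming $R_{ab}=\lambda g_{ab}$, the Ricci tensor of $\widetilde g_{ab}=g_{ab}+\epsilon h_{ab}$ agrees with $\lambda\widetilde g_{ab}$ modulo $\mathrm{O}(\epsilon^2)$ if and only if $g^{ac}(\mathcal{C}h)_{abcd}=0$, i.e.\ $\mathcal{P}h=0$. So $\ker\mathcal{P}$ is exactly the space of infinitesimal Einstein deformations.

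Third, for the interpretation of $\mathrm{range}(\mathcal{K})$, I would invoke the standard identity $\mathcal{L}_X g_{ab}=2\nabla_{(a}X_{b)}=2\mathcal{K}(X)_{ab}$: if $\phi_\epsilon$ denotes the time-$\epsilon$ flow of $X^a$, then $\phi_\epsilon^*g_{ab}=g_{ab}+\epsilon\mathcal{L}_Xg_{ab}+\mathrm{O}(\epsilon^2)$, so up to the conventional factor of two the image of $\mathcal{K}$ is exactly the space of metric perturbations produced by infinitesimal diffeomorphisms, as already noted in the introduction with references~\cite[(5.7.11)]{OT}, \cite[(C.2.17)]{W}.

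There is no genuine obstacle at this stage: the substantive work has been done upstream, most notably in the identity \eqref{C_circ_K}, where the Einstein hypothesis kills both the $R_{(b}{}^e\mu_{d)e}$ contraction and (via the contracted Bianchi identity) the $\nabla^eR_{bd}$ term. The theorem is essentially bookkeeping that gathers Propositions~\ref{first_einstein_implication} and~\ref{second_einstein_implication} together and records the geometric meaning of $\mathcal{K}$; the only care needed is to match signs and the factor of two between $\mathcal{K}$ and $\mathcal{L}_X g$.
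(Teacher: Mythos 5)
Your proposal is correct and follows essentially the same route as the paper: the statements about $\mathcal{P}$ are read off directly from Propositions~\ref{first_einstein_implication} and~\ref{second_einstein_implication}, and the interpretation of the range of $\mathcal{K}$ rests on the identity $\mathcal{L}_Xg_{ab}=2\nabla_{(a}X_{b)}$, which the paper verifies by the same short Lie-derivative computation you invoke.
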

\begin{proof} The statements regarding ${\mathcal{P}}$ follow from
Propositions~\ref{first_einstein_implication}
and~\ref{second_einstein_implication}.  Regarding~${\mathcal{K}}$, it remains
to observe that for any $1$-form $X_a$, the Lie derivative ${\mathcal{L}}_X$
along the corresponding vector field~$X^a$, when applied to the
metric~$g_{ab}$, gives
$${\mathcal{L}}_Xg_{ab}
=X^c\nabla_cg_{ab}+(\nabla_aX^c)g_{cb}+(\nabla_bX^c)g_{ac}
=\nabla_aX_b+\nabla_bX_a=2\nabla_{(a}X_{b)},$$ 
and we are done.
\end{proof}
In the Ricci-flat case $R_{ab}=0$, the kernel of (\ref{this_is_P}) appears
as~\cite[(2.6)]{Curtis} and is formulated there in terms of spinors when the
dimension is four.  The Ricci-flat equation
$$\Delta h_{bd}-2\nabla^e\nabla_{(b}h_{d)e}+\nabla_b\nabla_dh=0$$
also appears as~\cite[(5.7.14)]{OT} and~\cite[(7.5.15)]{W} and, with non-zero Ricci
tensor, is equivalent to~\cite[(2.5)]{FH}.
	

\begin{thebibliography}{XX}

\bibitem{C} E.~Calabi,
{\em On compact Riemannian manifolds with constant curvature~I}, in
Differential Geometry,
Proc. Symp. Pure Math. vol. III, 
Amer. Math. Soc. 1961, pp.~155--180.

\bibitem{Curtis} G.E. Curtis,
{\em Twistors and linearized Einstein theory on plane-fronted impulsive wave
backgrounds}, 
Gen. Relativity Gravitation {\bf 9} (1978) 987--997.

\bibitem{CELM} F. Costanza, M. Eastwood, T. Leistner, and B. McMillan,
{\em A Calabi operator for Riemannian locally symmetric spaces},
arXiv:2112.00841, 2021.

\bibitem{E} M.G. Eastwood,
{\em Notes on projective differential geometry},
in Symmetries and Overdetermined Systems of Partial Differential Equations,
IMA Volumes 144, Springer 2008, pp.~41--60.

\bibitem{FH} C.J. Fewster and D.S. Hunt, 
{\em Quantization of linearized gravity in cosmological vacuum spacetimes},
Rev. Math. Phys. {\bf 25} (2013), no. 2, 1330003, 44 pp. 

\bibitem{OT} R. Penrose and W. Rindler,
{\em Spinors and Space-time, Vol.~1}, 
Cambridge University Press 1984.

\bibitem{W} R.M. Wald,
{\em General Relativity},
University of Chicago Press 1984.
\end{thebibliography}
\end{document}